\documentclass[leqno]{amsart}
\pdfoutput=1
\usepackage[whole]{bxcjkjatype}
\usepackage[a4paper, margin=3cm]{geometry}
\usepackage{amsthm}
\usepackage{amsmath,amssymb,mathtools,mathdots}
\usepackage{mathrsfs}
\usepackage{url}
\usepackage[all]{xy}
\usepackage{tikz}
\usepackage{tikz-cd}
\usetikzlibrary {arrows,backgrounds}
\usetikzlibrary{shapes.geometric, intersections, calc}
\usetikzlibrary{decorations.pathmorphing, arrows}
\usepackage{comment}
\usepackage{placeins}

\usepackage{multirow}
\usepackage{diagbox}
\usepackage{makecell}
\usepackage{caption}

\newtheorem{theorem}{Theorem}[section]
\newtheorem*{theorem*}{Theorem}
\newtheorem{lemma}[theorem]{Lemma}
\newtheorem*{lemma*}{Lemma}

\newtheorem*{proposition*}{Proposition}

\newtheorem*{corollary*}{Corollary}

\newtheorem*{claim*}{Claim}

\newtheorem*{fact*}{Fact}
\newtheorem{conjecture}[theorem]{Conjecture}
\newtheorem*{conjecture*}{Conjecture}
\theoremstyle{definition}

\newtheorem*{definition*}{Definition}

\newtheorem*{example*}{Example}
\newtheorem{remark}[theorem]{Remark}
\newtheorem*{remark*}{Remark}

\newtheorem*{question*}{Question}

\newtheorem*{assumption*}{Assumption}
\numberwithin{equation}{section}
\allowdisplaybreaks[1]

\DeclareMathOperator{\id}{id}

\DeclareMathOperator{\Span}{Span}

\newcommand{\C}{{\mathbb C}}
\newcommand{\R}{{\mathbb R}}

\newcommand{\Half}{{\mathbb H}}

\newcommand{\iu}{{\mathrm{i}}}

\newcommand{\conj}[1]{{\overline{#1}}}

\newcommand{\fg}{{\mathfrak g}}

\usepackage{multirow}

\address{Graduate School of Mathematical Sciences, 
The University of Tokyo,
3-8-1 Komaba, Meguro-ku, 
Tokyo 153-8914, Japan}
\email{shuho@ms.u-tokyo.ac.jp}
\subjclass[2020]{22E25, 32M10 (Primary); 32E99 (Secondary).}
\begin{document}
\title
[
Non-Stein universal cover of a solvmanifold
]
{
A solvmanifold with a left-invariant complex structure 
whose universal cover is not Stein
}
\author{Shuho Kanda}
\date{}
\maketitle
\begin{abstract}
    We construct a simply connected solvable Lie group $G$ 
admitting lattices and a left-invariant complex structure $J$ such that 
$(G,J)$ is not Stein. 
This provides a counterexample to Hasegawa's conjecture on 
the Stein property of simply connected unimodular solvable Lie groups 
with left-invariant complex structures. 

\end{abstract}

\setcounter{tocdepth}{1}
\tableofcontents

\section{Introduction}

\subsection{Background}

Let $G$ be a simply connected solvable Lie group.
If $G$ admits a left-invariant complex structure $J$ and a lattice $\Gamma$,
then $J$ naturally descends to the compact manifold $\Gamma \backslash G$,
and $X=(\Gamma \backslash G, J)$ 
becomes a compact complex manifold.
Such a manifold $X$ is called a solvmanifold with left-invariant complex structure. 
Solvmanifolds have been actively studied in complex geometry, especially
in non-Kähler geometry, since they exhibit many unusual phenomena in
complex geometry.
When $G$ is nilpotent, $X$ is called a nilmanifold.
Although nilmanifolds are easier to study than general solvmanifolds,
many mysteries still remain.

In this paper, we focus on the properties of the universal cover of $X$.
In \cite{Has10}, 
while studying deformations of complex structures on
solvmanifolds, Hasegawa proposed the following conjecture.

\begin{conjecture}[\cite{Has10}]
\label{Conj:Hasegawa}
    Let $G$ be a $2n$-dimensional 
    simply connected unimodular solvable (nilpotent) Lie group. 
    Then for every left-invariant complex structure $J$ on $G$, 
    the complex manifold $(G,J)$ is Stein (biholomorphic to $\C^n$ respectively).
\end{conjecture}

Recently, the nilpotent case was settled in \cite{HRSTW26}. 
That is, if $G$ is nilpotent, then in the above setting one has
$(G,J) \simeq \C^n$.
The same conclusion does not hold in the solvable case.
For example, Oeljeklaus--Toma manifolds, which are higher-dimensional
generalizations of Inoue surfaces, are solvmanifolds with left-invariant
complex structures, but their universal covers are of the form
$\Half^s \times \C^t$, 
where $\Half$ denotes the upper half plane. 
Thus the universal cover need not be $\C^n$.
Conjecture \ref{Conj:Hasegawa} asserts, however, 
that it should still be Stein.
Although some evidence supporting this conjecture had been obtained, 
for instance in \cite{AT26}, essentially no further progress had been made.

\subsection{Result}

In this paper, we give a counterexample to Conjecture \ref{Conj:Hasegawa}.
The counterexample is obtained by putting a different complex structure on
the Nakamura manifold \cite{Nak75}.
The Nakamura manifold is the quotient $X=\Gamma\backslash G$ by a lattice
$\Gamma$, where $G$ is, up to isomorphism, the unique non-nilpotent type
among $3$-dimensional simply connected unimodular complex solvable Lie
groups:
\begin{equation}\label{Equ:Nakamura}
G=\C \ltimes_{\rho} \C^2, \quad \rho(z)= \mathrm{diag}(e^z,e^{-z}).
\end{equation}
With its original complex structure, the universal cover $G$ is of course
biholomorphic to $\C^3$.
However, we regard $G$ as a $6$-dimensional real Lie group and equip it
with another left-invariant complex structure. 
The precise statement and the explicit description of 
the complex structure are as follows.

\begin{theorem}
Let
\[
G=\C \ltimes_{\rho} \C^2, \quad \rho(z)= \mathrm{diag}(e^z,e^{-z})
\]
be the simply connected complex solvable Lie group, and let
$\fg=\C \ltimes_{d\rho} \C^2$ be its Lie algebra. We take an
$\R$-basis of $\fg$ as
\begin{align*}
    T=(1,0,0), \quad X=(0,1,0), \quad Y=(0,0,1), \\
    T'=(\iu,0,0), \quad X'=(0,\iu,0), \quad Y'=(0,0,\iu). 
\end{align*}
Define a complex structure $J$ on $\fg$ determined by the complex
Lie subalgebra $\fg^{1,0} \subset \fg_{\C}$ given by
\[
\fg^{1,0}=\Span_{\C}({T-\iu T', X-\iu X'+2\iu(X'+T'), Y-\iu Y'}).
\]
Then the complex manifold $G$ endowed with the left-invariant complex
structure induced by $J$ is not Stein.
\end{theorem}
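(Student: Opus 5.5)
The plan is to find explicit global holomorphic coordinates on $(G,J)$. These will exhibit $(G,J)$ as the universal cover of an explicit domain in $\C^3$. Non-Steinness then reduces to a local failure of pseudoconvexity, checked via Oka's theorem for Riemann domains over $\C^n$.

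\emph{Step 1: holomorphic coordinates.} Write elements of $G$ as $(z,u,v)$ with product $(z,u,v)(z',u',v')=(z+z',u+e^{z}u',v+e^{-z}v')$. The left-invariant fields are then $T=\partial_z+\partial_{\bar z}$, $T'=\iu(\partial_z-\partial_{\bar z})$, $X-\iu X'=2e^{z}\partial_u$, $X+\iu X'=2e^{\bar z}\partial_{\bar u}$, and similarly for $Y,Y'$. Conjugating the given $\fg^{1,0}$, the $(0,1)$-bundle is spanned by
\[
\partial_{\bar z},\qquad \partial_{\bar v},\qquad X-\iu X'-2\iu T' \equiv 2\bigl(e^{z}\partial_u+\partial_z\bigr) \pmod{\partial_{\bar z}}.
\]
A function is holomorphic iff it is killed by these three fields. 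One checks directly that the functions
\[
a=\bar u,\qquad w=u-e^{z},\qquad v
\]
are holomorphic, and that their differentials are independent. Hence $\Phi=(a,w,v)\colon G\to\C^3$ is a local biholomorphism. Its image is $\Omega=\{(a,w,v): w\neq \bar a\}=(\C^2\setminus P)\times\C$, where $P=\{(a,\bar a)\}$. The fibres of $\Phi$ are $z\mapsto z+2\pi\iu k$, since $u=\bar a$ and $e^{z}=\bar a-w$. So $\Phi$ is a covering map, and because $G$ is simply connected, $(G,J)$ is the universal cover of $\Omega$. Moreover $P$ is a totally real $2$-plane ($P\cap \iu P=0$), so after a complex-linear change of coordinates on $\C^2$ we may take $P=\R^2\subset\C^2$.

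\emph{Step 2: reduction to a boundary-distance computation.} Suppose $(G,J)$ were Stein. Via $\Phi$ it is a Riemann domain over $\C^3$, so by Oka's theorem $-\log d_G$ would be plurisubharmonic, where $d_G$ is the boundary distance (largest polydisc or ball lifting). Take a point $\tilde q\in G$ lying over $q=(q',v_0)$ with $q'$ very close to a point $p\in P$. Any ball in $\C^2$ around $q'$ of radius $<\mathrm{dist}(q',P)$ lies in $\C^2\setminus P$ and is simply connected, so it lifts. On the other hand, no larger ball lifts, since $\Phi$ has no extension across $P$ (the fibre coordinate $\log(\bar a-w)$ blows up there). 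Hence locally $d_G(\tilde q)=\mathrm{dist}(q',P)$, in the $\C^2$-directions. The same must be verified for the polydisc/ball normalization actually used, taking product balls to handle the free $\C$-factor.

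\emph{Step 3: the Levi form computation.} With $P=\R^2$ and $\rho=|y|^2=y_1^2+y_2^2$ (where $z_j=x_j+\iu y_j$), we have $u=-\tfrac12\log\rho$ up to a constant. A direct computation gives, for $\xi\in\C^2$,
\[
L_{-\log\rho}(\xi)=-\frac{|\xi|^2}{2\rho}+\frac{|\langle y,\xi\rangle|^2}{\rho^2}.
\]
At $y=(\varepsilon,0)$ and $\xi=(0,1)$ this equals $-1/(2\varepsilon^2)<0$. So $-\log d_G$ is not plurisubharmonic near the lifts of points of $P$, which contradicts Steinness.

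The main obstacle is Step 2: justifying rigorously that the boundary distance on the universal cover agrees with $\mathrm{dist}(\cdot,P)$ near $P$, and invoking the right form of Oka's pseudoconvexity theorem for unramified Riemann domains. An alternative route, to try if that is awkward, is a Kontinuitätssatz argument with a family of analytic discs in the cover. But naive discs in complex lines always meet the plane $P$, so the Levi-form route looks cleaner.
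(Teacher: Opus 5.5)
Your proof is correct, but it reaches non-Steinness by a different route than the paper.

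\textbf{Step 1 versus the paper.} Your map $\Phi=(\bar u,\,u-e^{z},\,v)$ is exactly the paper's $\Psi=(e^{t}-x,\,\bar x,\,y)$ after the relabeling $(a,w,v)=(\xi_2,-\xi_1,\xi_3)$. You find it by solving the $\bar\partial$-equations for the conjugate of the given $\fg^{1,0}$ directly. The paper instead obtains it from Snow's map $G\to G_\C/G^{0,1}$ via $G_\C\simeq G\times G$.

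\textbf{The non-Steinness step.} Here the two arguments genuinely diverge. The paper uses Bochner's tube theorem. On each of the two simply connected tubes $T_{\Omega^{\pm}}$ covering $D'$, a holomorphic function on the universal cover descends and extends to an entire function. The identity theorem then forces equal values at the two distinct lifts $\tilde Q^{\pm}$, so holomorphic separability fails. You instead use the characterization of Stein Riemann domains over $\C^n$ by plurisubharmonicity of $-\log d$, together with a Levi form computation. Both the computation and the conclusion are right. You only need the classical direction (Stein $\Rightarrow$ $-\log d$ plurisubharmonic), not Oka's solution of the Levi problem.

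\textbf{Your Step 2 is not an obstacle.} The step you flag is immediate, and it holds globally. A ball $B(q,r)$ lifts through $\tilde q$ if and only if $B(q,r)\subset\Omega$. One direction is simple connectivity of the ball. The other is that any lift projects into $\Phi(G)=\Omega$. Hence $d_G=d_\Omega\circ\Phi$ everywhere, and no argument about $\Phi$ failing to extend across $P$ is needed. Just compose $\Phi$ with the linear change of coordinates, so that Euclidean distances are computed where $P=\R^2$.

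\textbf{What each route buys.} Your route is more general. It shows that failure of pseudoconvexity of the image domain alone rules out Steinness of every unramified cover of it, so the problem reduces to a local boundary check with no tube structure needed. The paper's route is more elementary, using only Bochner's theorem and the identity theorem. It is also more explicit: the same argument shows that every holomorphic function on $(G,J)$ is pulled back from an entire function on $\C^3$.
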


The idea of the construction is as follows.
In \cite{Sno86}, 
Snow constructed a holomorphic map
\[
\Phi \, \colon \, (G,J) \to G_{\C} / G^{0,1}
\]
for a left-invariant complex structure $J$ on $G$.
Here $G_{\C}$ and $G^{0,1}$ are the Lie groups corresponding to
$\fg \otimes_{\R} \C$ and to the $(-\iu)$-eigenspace
$\fg^{0,1}$ of $J$, respectively.
When $G$ is simply connected and solvable, one can show that
$G_{\C} / G^{0,1} \simeq \C^n$ and that
$\Phi$ is a covering map onto its image.
It follows from these properties that $(G,J)$ is the universal cover of
$D=\Phi(G) \subset G_{\C} / G^{0,1} \simeq \C^n$.

When $G$ is equipped with a complex conjugation, we have
\[
G_{\C} \simeq G \times G
\]
and, under this isomorphism, the embedding $G \hookrightarrow G_{\C}$ is
given by $x \mapsto (x,\conj{x})$.
Thus the problem becomes whether one can construct
$\fg^{0,1} \subset \fg_{\C} \simeq \fg \oplus \fg$ in such a way that the
universal cover of
\[
D=
\{[(x,\conj{x})] \in G_{\C} / G^{0,1} \mid x \in G\} 
\subset G_{\C} / G^{0,1} \simeq \C^n
\]
is not Stein.
In this paper, by taking $G$ to be the group in (\ref{Equ:Nakamura}), we
construct such an example for which
$D \simeq (\C^2 \backslash \R^2) \times \C$.
This domain is not Stein, and we also show that its universal cover is not
Stein.

\addtocontents{toc}{\protect\setcounter{tocdepth}{0}}
\section*{Acknowledgments}
\addtocontents{toc}{\protect\setcounter{tocdepth}{1}}
%目次に入れないようにしてある 
The author is grateful to Hisashi Kasuya and Keizo Hasegawa for helpful
discussions on this work. 
This research is supported by JSPS KAKENHI Grant number 24KJ0931. 

\section{Preliminaries}
\label{section:Preliminaries}

\subsection{Complexification of a complex Lie algebra}

Let $\fg_0$ be a real Lie algebra, and let
$\fg = (\fg_0)_{\mathbb C}=\fg_0\otimes_{\R}\mathbb C$
be its complexification. 
We regard $\fg$ as a real Lie algebra endowed
with the complex structure $J_{st}$, 
and consider again its complexification
$\fg_{\mathbb C}$. Define
\[
\iota_{\pm} \colon \fg \to \fg_{\mathbb C}, \qquad
X \mapsto \frac{1}{2}(X \mp \iu J_{st}X).
\]
Then $\iota_+$ is complex linear, while $\iota_-$ is complex anti-linear.
Therefore, by composing the latter with the complex conjugation
$\sigma \colon \fg \to \fg$, 
we obtain a complex linear map
\begin{equation}\label{Equ:iota}
\iota = \iota_+ \oplus (\iota_- \circ \sigma)
\colon \fg \oplus \fg \to \fg_{\mathbb C}.
\end{equation}
One checks that this gives an isomorphism of complex Lie algebras. Its
inverse is given by
\[
U+\iu V \mapsto (U+J_{st}V,\overline{U-J_{st}V}),
\qquad U,V\in \fg.
\]
In particular, under the identification $\fg_{\mathbb C}\simeq
\fg\oplus\fg$ via $\iota$, the natural inclusion
\[
\fg \hookrightarrow \fg_{\mathbb C}, \qquad X\mapsto X\otimes 1,
\]
is expressed as
\[
X\mapsto (X,\overline X), \qquad X\in\fg.
\]

On the Lie group level, let $G$ and $G_{\mathbb C}$ be the simply
connected Lie groups corresponding to $\fg$ and $\fg_{\mathbb C}$,
respectively. 
Then there is an isomorphism of complex Lie groups
\begin{equation}\label{Equ:GG}
G_{\mathbb C}\simeq G\times G,
\end{equation}
under which the natural inclusion $G\hookrightarrow G_{\mathbb C}$ is
expressed as
\[
x\mapsto (x,\overline x), \qquad x\in G.
\]

\subsection{The Snow map}

Here we explain the properties of the map
$\Phi \, \colon \, (G,J) \to G_{\C} / G^{0,1}$
constructed by Snow in \cite{Sno86}.
This map also plays an essential role 
in the proof of Hasegawa's conjecture
in the nilpotent case \cite{HRSTW26}. 
Here we review the material presented 
in Subsection 1.3 of \cite{Sno86}. 

Let $G$ be a simply connected solvable Lie group endowed with a
left-invariant complex structure $J$, and let $G_{\C}$ be its
complexification.
These correspond to the Lie algebra $\fg$ with $J$ and its complexification
$\fg_{\C}$.
The complex structure $J$ determines a complex Lie subalgebra
$\fg^{0,1} \subset \fg_{\C}$.
By a classical Malcev's theorem 
(see, for example, \cite[Theorem~3.18.12]{Var84}), 
the Lie group $G^{0,1} \subset G_{\C}$
corresponding to $\fg^{0,1}$ is closed. 

\begin{theorem}\label{Thm:Snow}
    The composition
    \[
    \Phi \, \colon \, (G,J) \hookrightarrow G_{\C} 
    \twoheadrightarrow G_{\C}/G^{0,1}
    \]
    is a local biholomorphism.
    Moreover, $\Phi$ is a covering map onto its image $D=\Phi(G)$.
    In particular, $(G,J)$ is the universal covering of $D$.
\end{theorem}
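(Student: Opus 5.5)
The plan has three steps: show that $\Phi$ is holomorphic and locally biholomorphic, show that it is equivariant for an action whose orbit is $D$, and then identify $D$ with the orbit space $G/H$ for a discrete subgroup $H$.

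\textbf{Local biholomorphism.} Let $\pi\colon G_{\C}\to G_{\C}/G^{0,1}$ be the projection and $x\in G$. Since $\Phi$ commutes with left translation by $G$ (see the equivariance step), it suffices to look at the identity. There the differential of $\Phi$ is the composite
\[
\fg\hookrightarrow \fg_{\C}\twoheadrightarrow \fg_{\C}/\fg^{0,1}.
\]
Since $\fg_{\C}=\fg^{1,0}\oplus\fg^{0,1}$ and $\fg\cap\fg^{0,1}=0$, this composite is a real isomorphism. It sends $X$ to the class of $X^{1,0}=\frac12(X-\iu JX)$. The map $X\mapsto X^{1,0}$ intertwines $J$ with multiplication by $\iu$, so the differential is complex linear. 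Left translation by $x\in G\subset G_{\C}$ is holomorphic on $G_{\C}/G^{0,1}$ and preserves $J$ on $G$, so $d\Phi_x$ is a $J$-complex linear isomorphism at every point. Hence $\Phi$ is holomorphic and a local biholomorphism.

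\textbf{Equivariance and the fibers.} $G$ acts on $G_{\C}/G^{0,1}$ by left multiplication, and $\Phi(gx)=g\Phi(x)$. So $D=\Phi(G)$ is the $G$-orbit of the base point $o=[e]$. Since $\Phi$ is a local diffeomorphism, $D$ is open. The stabilizer of $o$ in $G$ is $H=G\cap G^{0,1}$, and $\Phi$ factors as
\[
G\to G/H\xrightarrow{\ \simeq\ } D.
\]
The Lie algebra of $H$ is $\fg\cap\fg^{0,1}=0$, so $H$ is discrete. Moreover $H$ is closed in $G$, because $G^{0,1}$ is closed in $G_{\C}$ by Malcev's theorem. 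A discrete subgroup acting by left multiplication acts freely and properly discontinuously, so $G\to G/H$ is a covering map.

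\textbf{Main obstacle.} The remaining issue is to show that the orbit map $G/H\to D$ is a homeomorphism onto the open set $D$ with its subspace topology. The map is a bijective local diffeomorphism because $\Phi$ is one and descends through $H$. A bijective local homeomorphism is automatically open, hence a homeomorphism onto its image. Thus $\Phi$ is a covering $G\to D$ with deck group $H$.

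Finally, $G$ is simply connected, so $\Phi$ is the universal covering of $D$. Pulling back the complex structure of $D$ along $\Phi$ gives $J$, since $\Phi$ is locally biholomorphic. Therefore $(G,J)$ is biholomorphic to the universal cover of $D$.

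Solvability is not needed for the covering statement itself. It enters only through Malcev's closedness theorem, and later for the identification $G_{\C}/G^{0,1}\simeq\C^n$.
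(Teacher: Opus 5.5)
Your proposal is correct and follows essentially the same route as the paper. It gets the local biholomorphism from $d\Phi_e$ being a (complex-linear) isomorphism together with $G$-equivariance, and then factors $\Phi$ through the covering $G\to G/(G\cap G^{0,1})$, whose stabilizer is discrete because $\fg\cap\fg^{0,1}=0$.

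You also spell out two points the paper leaves implicit: the complex-linearity of $d\Phi_e$, and that the injective local homeomorphism $G/H\to D$ is a homeomorphism onto its open image. The only blemish is cosmetic: for the quotient $G/H$, the subgroup $H$ acts on $G$ by right multiplication, not left, and this changes nothing in the argument.
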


\begin{proof}
    The differential at the identity
    $d\Phi_e \, \colon \, \fg \to \fg_{\C} / \fg^{0,1}$
    is a linear isomorphism.
    Since $\Phi$ is equivariant with respect to the left action of $G$,
    it is a local diffeomorphism.
    Moreover, by the left-invariance of $J$, the map $\Phi$ is holomorphic.

    Now set $K = G \cap G^{0,1}$. Then $\Phi$ factors through the
    projection as
    \[
    G \twoheadrightarrow G/K \overset{\sim}{\longrightarrow} D 
    \hookrightarrow G_{\C}/G^{0,1}.
    \]
    Since $K$ is discrete by $\fg \cap \fg^{0,1}=\{0\}$,
    the map $G \twoheadrightarrow G/K$ is a covering map, and hence
    $\Phi$ is also a covering map onto its image.
\end{proof}

\begin{remark}
    In this situation, one can show in general that
    $G_{\C}/G^{0,1} \simeq \C^n$ \cite[p.174]{HO81}.
    In this paper, however, we do not use this general result, since we
    construct this isomorphism explicitly.
\end{remark}

According to this theorem, the complex manifold structure of $(G,J)$ can
be understood by looking at the image of $G$ in $G_{\C}/G^{0,1}$.
In the next section, we construct $G^{0,1}$ explicitly.

\section{Construction of the example}

We define a $3$-dimensional simply connected complex Lie group $G$ by
\[
G=\C \ltimes_{\rho} \C^2, \quad \rho(z)= \mathrm{diag}(e^z,e^{-z}). 
\]
This is the complexification of $G_0=\R \ltimes_{\rho} \R^2$, 
and $G$ admits a lattice by [Nak75]. 
We take a
$\C$-basis of $\fg= \C \ltimes_{d\rho}\C^2$ as
\[
    T=(1,0,0), \quad X=(0,1,0), \quad Y=(0,0,1). 
\]
The brackets are given by $[T,X]=X$ and $[T,Y]=-Y$.

By the isomorphism (\ref{Equ:GG}), from now on we write $G_{\C}$ for $G \times G$.
A point of $G_{\C}$ is described by the $6$ complex coordinates $((t_1,x_1,y_1),(t_2,x_2,y_2))$.
Corresponding to these coordinates, 
we write $\{T_1,X_1,Y_1,T_2,X_2,Y_2\}$ 
for the $\C$-basis of $\fg_{\C}=\fg\oplus\fg$ obtained from the two copies
of the basis of $\fg$. 

Now define complex subalgebras $\fg^{1,0}$ and $\fg^{0,1}$ of $\fg_{\C}$ by
\begin{align}
\fg^{1,0} &= \Span_{\C}({T_1, T_2+X_2, Y_1}) \label{Equ:g^10} \\
\fg^{0,1} &= \Span_{\C}({T_1+X_1, T_2, Y_2}) \notag
\end{align}
This defines a complex structure $J$ on $\fg$.
Since one checks that $\fg^{1,0}$ and $\fg^{0,1}$ are subalgebras, 
the complex structure $J$ is integrable.

The connected Lie subgroup of $G$ whose Lie algebra is
$\C(T+X)\subset \fg$ is
\[
\left\{ (s,e^s-1,0) \in G 
    \mid s \in \C \right\}.
\]
Similarly, the connected Lie subgroup corresponding to
$\Span_{\C}(\{T,Y\})\subset \fg$ is
\[
    \left\{ (t,0,r) \in G 
    \mid t,r \in \C \right\}.
\]
Therefore, the complex Lie group $G^{0,1}$ corresponding to
$\fg^{0,1}$ is given by
\[
G^{0,1} = \left\{ ((s,e^s-1,0),(t,0,r)) \in G_{\C} 
\mid s,t,r \in \C \right\}.
\]

\begin{lemma}
    The map
    \[
    Q \,\colon\, G_{\C} \to \C^3, \quad 
    ((t_1,x_1,y_1),(t_2,x_2,y_2)) \mapsto (e^{t_1}-x_1,x_2,y_1)
    \]
    induces a biholomorphism
    \[
    \widetilde{Q} \, \colon\, G_{\C} / G^{0,1} 
    \overset{\sim}{\longrightarrow} \C^3.
    \]
\end{lemma}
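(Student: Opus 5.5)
We will show that $Q$ is constant exactly on the left cosets $gG^{0,1}$ (cosets of the form $gh$, $h\in G^{0,1}$), that $Q$ is surjective, and that $Q$ admits a holomorphic section. Then $\widetilde{Q}$ is a holomorphic bijection whose inverse is holomorphic, and hence a biholomorphism.

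First we write down the group law of $G$. Take the semidirect product convention
\[
(t,x,y)(t',x',y')=(t+t',\,x+e^{t}x',\,y+e^{-t}y').
\]
With this convention the one-parameter subgroup for $T+X$ is indeed $(s,e^s-1,0)$, consistent with the paper. Fix $g=((t_1,x_1,y_1),(t_2,x_2,y_2))$ and $h=((s,e^s-1,0),(t,0,r))\in G^{0,1}$. Then
\[
gh=\bigl((t_1+s,\;x_1+e^{t_1}(e^s-1),\;y_1),\;(t_2+t,\;x_2,\;y_2+e^{-t_2}r)\bigr).
\]
Applying $Q$ gives first coordinate $e^{t_1+s}-x_1-e^{t_1+s}+e^{t_1}=e^{t_1}-x_1$. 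The second and third coordinates $x_2$ and $y_1$ are unchanged. So $Q$ is right $G^{0,1}$-invariant and descends to a holomorphic map $\widetilde{Q}$ on the quotient. The quotient is a complex manifold because $G^{0,1}$ is a closed complex subgroup.

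Next we build a section. Define
\[
S(a,b,c)=((0,1-a,c),(0,b,0)).
\]
Then $Q\circ S(a,b,c)=(1-(1-a),b,c)=(a,b,c)$. So $S$ is holomorphic, $Q$ is surjective, and $\widetilde{Q}\circ(\pi\circ S)=\id_{\C^3}$, where $\pi$ is the projection to the quotient.

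The remaining step, and the main point, is injectivity: $Q(g)=Q(g')$ must imply $g'\in gG^{0,1}$. Given $g$, we solve explicitly for $h\in G^{0,1}$ with $gh=S(Q(g))$.
\begin{itemize}
\item Take $s=-t_1$ and $t=-t_2$, which makes both $T$-coordinates zero.
\item The first $x$-coordinate becomes $x_1+1-e^{t_1}=1-(e^{t_1}-x_1)$, as required.
\item Take $r=-e^{t_2}y_2$, which kills the second $y$-coordinate.
\item The coordinates $y_1$ and $x_2$ are untouched.
\end{itemize}
Hence every coset contains the point $S(Q(g))$, which depends only on $Q(g)$. Two points with the same image therefore lie in the same coset, and $\widetilde{Q}$ is injective.

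Combining the two steps, $\pi\circ S$ is a holomorphic two-sided inverse of $\widetilde{Q}$, so $\widetilde{Q}$ is a biholomorphism. The only delicate point is fixing the multiplication convention so that it matches the stated one-parameter subgroups. The rest is direct computation.
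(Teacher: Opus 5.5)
Your proposal is correct and follows the paper's proof essentially verbatim. You use the same right $G^{0,1}$-invariance computation, your section $S$ is the paper's map $P$, and your choice $h=((-t_1,e^{-t_1}-1,0),(-t_2,0,-e^{t_2}y_2))$ is exactly the element the paper uses to verify $P\circ\widetilde{Q}=\id$.
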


\begin{proof}
    We first observe that $Q$ is invariant 
    under right multiplication by $G^{0,1}$.
    Indeed,
    \begin{align*}
    &Q(((t_1,x_1,y_1),(t_2,x_2,y_2)) \cdot ((s,e^s-1,0),(t,0,r))) \\
    =&Q((t_1+s, e^{t_1}(e^s-1)+x_1,y_1),(t_2+t,x_2,e^{-t_2}r+y_2))) \\
    =&(e^{t_1+s}-(e^{t_1}(e^s-1)+x_1), x_2, y_1) \\
    =&(e^{t_1}-x_1,x_2,y_1) \\
    =&Q(((t_1,x_1,y_1),(t_2,x_2,y_2))).
    \end{align*}
    Hence $Q$ induces a holomorphic map $\widetilde{Q}$.

    Define
    \[
    P \, \colon \, 
    \C^3 \to G_{\C} / G^{0,1}, \quad (a,b,c) \mapsto [((0,1-a,c),(0,b,0))].
    \]
    Then $\widetilde{Q} \circ P = \id$ is clear. Moreover,
    \begin{align*}
    &(P\circ \widetilde{Q}) ([((t_1,x_1,y_1),(t_2,x_2,y_2))]) \\
    =&[((0,1-(e^{t_1}-x_1),y_1),(0,x_2,0))] \\
    =&[((t_1,x_1,y_1),(t_2,x_2,y_2)) \cdot
    ((-t_1,e^{-t_1}-1,0),(-t_2,0,-e^{t_2}y_2))]\\
    =&[((t_1,x_1,y_1),(t_2,x_2,y_2))].
    \end{align*}
    Thus $P \circ \widetilde{Q}= \id$ as well, and hence
    $\widetilde{Q}$ is bijective.
\end{proof}

By composing the Snow map $\Phi$ defined in Section \ref{section:Preliminaries}
with $\widetilde{Q}$, we obtain a holomorphic map
\[
\Psi \, \colon \, (G,J) \hookrightarrow G_{\C} \twoheadrightarrow 
G_{\C} / G^{0,1} \overset{\sim}{\longrightarrow} \C^3.
\]
More explicitly, it is given by
\[
\Psi \, \colon \, (t,x,y) \mapsto ((t,x,y),(\conj{t},\conj{x},\conj{y}))
\mapsto [(t,x,y),(\conj{t},\conj{x},\conj{y})] 
\mapsto (e^t-x,\conj{x},y).
\]
The image $D=\Psi(G)$ is
\[
D = \left\{ (\xi_1,\xi_2,\xi_3) \in \C^3 \mid \xi_1+\conj{\xi_2} \neq 0 \right\}.
\]

\begin{lemma}
    The universal cover $\widetilde{D}$ of $D$ is not Stein.
\end{lemma}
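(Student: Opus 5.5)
The plan is to show that every holomorphic function on $\widetilde{D}$ is pulled back from $\C^3$. It is then invariant under the deck transformations, so holomorphic functions fail to separate points and $\widetilde{D}$ cannot be Stein. The main tool is Bochner's tube theorem.

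\textbf{Step 1: $D$ as a tube domain.} The defining condition $\xi_1+\conj{\xi_2}=0$ cuts out the totally real plane $\{(z,-\conj{z})\}$. Take the complex linear coordinates $u=(\xi_1-\xi_2)/2$, $v=(\xi_1+\xi_2)/(2\iu)$, $w=\xi_3$. On the bad set, $u=\mathrm{Re}\,\xi_1$ and $v=\mathrm{Im}\,\xi_1$ are real. Hence
\[
D\simeq (\C^2\setminus\R^2)\times\C=\R^3+\iu\bigl((\R^2\setminus\{0\})\times\R\bigr),
\]
the tube over $B=(\R^2\setminus\{0\})\times\R$. Its fundamental group is $\pi_1(B)\simeq\mathbb{Z}$. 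The universal cover $\pi\colon\widetilde{D}\to D$ is therefore infinite cyclic, with a nontrivial deck transformation $\gamma$, and $\gamma p\neq p$ for every $p$.

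\textbf{Step 2: cover the base by half-planes.} Consider the four half-space tubes
\[
H_1=\{\mathrm{Im}\,u>0\},\quad H_2=\{\mathrm{Im}\,v>0\},\quad H_3=\{\mathrm{Im}\,u<0\},\quad H_4=\{\mathrm{Im}\,v<0\}
\]
in $D$. Each is convex, hence simply connected, and so lifts biholomorphically to sheets in $\widetilde{D}$. Choose lifts $H_1^{(0)},H_2^{(0)},H_3^{(0)}$ so that consecutive ones intersect. Their union $U$ maps biholomorphically onto the tube over
\[
S=\bigl(\R^2\setminus\{(0,-s)\mid s\ge 0\}\bigr)\times\R,
\]
which corresponds to the angular sector $(0,2\pi)$ in the $(\mathrm{Im}\,u,\mathrm{Im}\,v)$-plane. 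Justifying this injectivity is a routine path-lifting argument, because $U$ is simply connected.

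\textbf{Step 3: Bochner and the identity theorem.} Let $f\in\mathcal{O}(\widetilde{D})$. Then $f|_U$ is a holomorphic function on the tube $\R^3+\iu S$, where $S$ is a connected domain. By Bochner's tube theorem, $f|_U$ extends to a holomorphic function on the tube over the convex hull of $S$. That convex hull is $\R^3$, so the extension is an entire function $F$ on $\C^3$.

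Now $F\circ\pi$ and $f$ agree on the open set $U$. Since $\widetilde{D}$ is connected, the identity theorem gives $f=F\circ\pi$ on all of $\widetilde{D}$. Hence $f(\gamma p)=f(p)$ for every $f$ and every $p$, while $\gamma p\neq p$. A Stein manifold is holomorphically separable, so $\widetilde{D}$ is not Stein.

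\textbf{Main obstacle.} The delicate point is Step 2: checking that the three chosen lifts really glue into a single-sheeted copy of the tube over the slit region. One must make sure the lift of $H_3$ meeting $H_2^{(0)}$ does not also meet $H_1^{(0)}$ on a different sheet. I would handle this by working in explicit coordinates on $\widetilde{D}$: write $(\mathrm{Im}\,u,\mathrm{Im}\,v)=(r\cos\theta,r\sin\theta)$ with $\theta\in\R$, identify the sheet $H_j^{(0)}$ with the range $\theta\in((j-1)\pi/2,(j+1)\pi/2)$, and observe that $U$ corresponds to $\theta\in(0,2\pi)$, on which the projection to $D$ is injective.
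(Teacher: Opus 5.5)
Your proof is correct and uses the same core idea as the paper: realize $D$ as the tube over $(\R^2\setminus\{0\})\times\R$, lift a simply connected slit subtube biholomorphically into $\widetilde{D}$, and apply Bochner's tube theorem to it. The routes differ only in the finishing step.

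\begin{itemize}
\item The paper lifts two slit tubes $T_{\Omega^{\pm}}$ through a common point $\tilde P$ and extends both descended functions $f^{\pm}$ to $\C^3$. It then uses the identity theorem on $\C^3$ to get $f(\tilde Q^+)=f(\tilde Q^-)$ for one specific pair of lifts.
\item You use a single slit tube and apply the identity theorem on the connected manifold $\widetilde{D}$. This gives the stronger statement $f=F\circ\pi$ for every $f\in\mathcal{O}(\widetilde{D})$, so every holomorphic function is deck-invariant.
\end{itemize}

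Two small slips should be fixed.

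First, injectivity of $\pi|_U$ does not follow from $U$ being simply connected. A wide horizontal strip in $\C$ is simply connected, yet $\exp$ is not injective on it. The actual reason is that $H_1\cap H_3=\emptyset$ in $D$, while consecutive lifts agree over the convex, hence connected, overlaps $H_1\cap H_2$ and $H_2\cap H_3$. A simpler fix is available: $S$ is star-shaped about $(0,1,0)$, so $T_S$ is simply connected. Then any component of $\pi^{-1}(T_S)$ maps biholomorphically onto $T_S$, and the half-space gluing becomes unnecessary. This is exactly how the paper handles $\Omega^{\pm}$.

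Second, your angular bookkeeping is shifted by $\pi/2$. With your convention $(\mathrm{Im}\,u,\mathrm{Im}\,v)=(r\cos\theta,r\sin\theta)$, the set $H_j$ corresponds to $\theta\in((j-2)\pi/2,\,j\pi/2)$. So $U$ is $\theta\in(-\pi/2,3\pi/2)$, not $(0,2\pi)$. The interval $(-\pi/2,3\pi/2)$ is the one that matches your $S$, and the shift does not affect the argument.
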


\begin{proof}
The linear transformation
\[
\C^3 \overset{\sim}{\longrightarrow} \C^3, 
\quad (\xi_1,\xi_2,\xi_3) \mapsto
(\xi_1-\xi_2, \iu(\xi_1+\xi_2),\xi_3)
\]
identifies $D$ with $(\C^2 \backslash \R^2) \times \C$.
Thus it is enough to prove the assertion for
$D'=(\C^2 \backslash \R^2) \times \C$.
Let $p \, \colon \, \widetilde{D'} \to D'$ 
be the universal covering. We show that $\widetilde{D'}$ is not Stein.

We write $D'$ as the tube domain
\[
D' = T_{\Omega} \coloneqq \R^3 + \iu\Omega, \quad 
\Omega=\{(x,y,z) \in \R^3 \mid (x,y) \neq (0,0) \}. 
\]
Take two points
\[
P=(0,\iu,0), \quad Q=(0,-\iu,0)
\]
in $D'$, and fix a point $\tilde{P} \in \widetilde{D'}$ such that
$p(\tilde{P})=P$.
Set
\[
\Omega^{\pm}= \R^3 \backslash \{(x,0,z) \in \R^3 \mid \pm x \le 0 \}.
\]
Then $\Omega^{\pm}$ are simply connected and
$\Omega = \Omega^+ \cup \Omega^-$. 
For each sign, lift $T_{\Omega^{\pm}}$ to the component of
$p^{-1}(T_{\Omega^{\pm}})$ containing $\tilde{P}$, and denote it by
$\widetilde{T_{\Omega^{\pm}}}$.
With these choices, the point $Q$ has two distinct lifts
$\tilde{Q}^+,\tilde{Q}^-$.

Now let $f \in \mathcal{O}(\widetilde{D'})$.
Restricting $f$ to $\widetilde{T_{\Omega^{\pm}}}$ and descending it by
$p$, we obtain holomorphic functions
$f^{\pm} \in \mathcal{O}(T_{\Omega^{\pm}})$. 
By Bochner's tube theorem, 
each $f^{\pm}$ extends
holomorphically to $\C^3$, 
which is the convex hull of $T_{\Omega^{\pm}}$.
Since $f^+$ and $f^-$ agree in a neighborhood of $P$, the identity theorem
implies that their extensions agree on all of $\C^3$.
Therefore
\[
f(\tilde{Q}^+)=f^+(Q)=f^-(Q)=f(\tilde{Q}^-).
\]
Thus the two distinct points $\tilde{Q}^{\pm}$ of $\widetilde{D'}$ cannot
be separated by holomorphic functions. Hence $\widetilde{D'}$ is not
Stein.
\end{proof}

By Theorem \ref{Thm:Snow}, $(G,J)$ is biholomorphic to
$\widetilde{D}$.
It follows that $(G,J)$ is not Stein. 

\begin{remark}
    In \cite{Has10}, 
    it is concluded that, 
    for the Lie group $G$ defined by (\ref{Equ:Nakamura}), 
    every left-invariant complex structure $J$ on $G$ 
    makes $(G,J)$ biholomorphic to $\C^3$. 
    The example constructed here appears to
    provide a counterexample to this conclusion.
    More precisely, 
    it seems to correspond to a case which is not covered 
    in the analysis of the possible structures of \(J\) in \cite{Has10}. 
\end{remark}

Now, although we have explicitly constructed $(G,J)$, 
the structure of $J$ was given 
under the identification $\fg_{\C}=\fg \oplus \fg$. 
We describe this explicitly in terms of $\fg_{\C}=\fg \otimes \C$.

The complex Lie algebra $\fg$ has a $\C$-basis ${T,X,Y}$, 
while as an $\R$-basis one may take ${T,X,Y,T',X',Y'}$. 
Here $T',X',Y'$ denote the $\iu$-multiples of $T,X,Y$,
respectively. 
Let $J_{st}$ denote the natural complex structure on 
$\fg$ viewed as a complex Lie algebra. 
Then the $\iu$-eigenspace of $J_{st}$ in $\fg_{\C}$ is
\[
\fg^{1,0}_{J_{st}} =
\Span_{\C}({T-\iu T', X-\iu X', Y-\iu Y'}). 
\]
Now, 
by applying (\ref{Equ:iota}) to (\ref{Equ:g^10}), 
which was given under the identification $\fg_{\C}=\fg \oplus \fg$,  
we obtain
\begin{align*}
\fg^{1,0}_{J} &=
\Span{\C}({T-\iu T', T+X+\iu(T'+X'), Y-\iu Y'}) \\
&=
\Span_{\C}
({T-\iu T', X-\iu X'+2\iu(X'+T'), Y-\iu Y'}). 
\end{align*}

Putting the above together, 
we obtain the following theorem.
\begin{theorem}
Let $\fg=\C \ltimes_{d\rho} \C^2$.
Define a complex structure $J$ on $\fg$ determined by the complex
Lie subalgebra $\fg^{1,0} \subset \fg_{\C}$ given by
\[
\fg^{1,0}=\Span_{\C}({T-\iu T', X-\iu X'+2\iu(X'+T'), Y-\iu Y'}).
\]
Then the complex manifold $G$ endowed with the left-invariant complex
structure induced by $J$ is not Stein.
\end{theorem}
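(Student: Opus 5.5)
The plan is to assemble the theorem from the pieces established in this section; essentially no new analysis is needed beyond one translation step. First I will check that the subalgebra $\fg^{1,0}$ in the statement is exactly the image under the isomorphism $\iota$ of (\ref{Equ:iota}) of the subalgebra $\Span_{\C}(T_1, T_2+X_2, Y_1)$ from (\ref{Equ:g^10}). Concretely, $\iota(T,0)=\frac12(T-\iu T')$ and $\iota(0,Y)$ is proportional to $Y-\iu Y'$. For the middle generator, $\iota(0,T+X)=\iota_-(\sigma(T+X))=\iota_-(T+X)=\frac12\bigl((T+X)+\iu(T'+X')\bigr)$, since $T,X$ are fixed by $\sigma$. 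Subtracting $\frac12(T-\iu T')$ and rescaling gives $X-\iu X'+2\iu(X'+T')$. Hence the two spans agree.

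Because $\iota$ is an isomorphism of complex Lie algebras, $\fg^{1,0}$ is a subalgebra. One should also confirm that $\fg^{1,0}\cap\conj{\fg^{1,0}}=0$ and that together they span $\fg_\C$. Under $\iota$, conjugation on $\fg_\C$ corresponds to $(A,B)\mapsto(\conj{B},\conj{A})$. It therefore sends $\Span(T_1,T_2+X_2,Y_1)$ to $\Span(T_2,T_1+X_1,Y_2)=\fg^{0,1}$, and these six vectors form a basis. So $J$ is an integrable left-invariant complex structure, and it is the same $J$ used earlier in this section.

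Next, I will apply Theorem \ref{Thm:Snow}: $(G,J)$ is the universal cover of $\Phi(G)$. Via the biholomorphism $\widetilde Q$ of the first lemma, $\Phi(G)$ is identified with $D=\Psi(G)=\{\xi_1+\conj{\xi_2}\neq0\}$. For this image, $\xi_1+\conj{\xi_2}=e^t-x+x=e^t\neq0$, and surjectivity onto $D$ follows by choosing $t$ with $e^t=\xi_1+\conj{\xi_2}$, $x=\conj{\xi_2}$, and $y=\xi_3$. Then I will invoke the second lemma, that $\widetilde D$ is not Stein, to conclude that $(G,J)\simeq\widetilde D$ is not Stein.

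The only real content is the non-Steinness of $\widetilde D$, which rests on Bochner's tube theorem and is already proved. The step where an error is most likely is the bookkeeping with $\iota_-\circ\sigma$, namely getting the signs right in the middle generator. I would double-check it by verifying directly that $J$ defined by the displayed $\fg^{1,0}$ makes $\Psi$ holomorphic, that is, that the $(0,1)$-vectors annihilate $e^t-x$, $\conj{x}$ and $y$.
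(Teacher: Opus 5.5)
Your proposal is correct and follows the paper's route. You transport $\Span_{\C}(T_1,T_2+X_2,Y_1)$ through $\iota$ to get the displayed $\fg^{1,0}$, then combine Theorem~\ref{Thm:Snow}, the biholomorphism $\widetilde{Q}$, the description of $D$, and the Bochner-tube-theorem lemma. Your extra checks that $\conj{\fg^{1,0}}=\fg^{0,1}$ with $\fg^{1,0}\oplus\fg^{0,1}=\fg_{\C}$, and that $\Psi(G)=D$, fill in steps the paper only asserts.

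One small slip: since $Y_1$ lies in the first factor, you want $\iota(Y,0)=\iota_+(Y)=\frac12(Y-\iu Y')$. As written, $\iota(0,Y)=\frac12(Y+\iu Y')$, which is not proportional to $Y-\iu Y'$.
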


\bibliographystyle{amsalpha5}
\bibliography{CounterEx_Hasegawa}
\end{document}